\numberwithin{equation}{section}
\newtheorem{lem}{\quad\textbf{\Large Lemma}}[section]
\newtheorem{thm}[lem]{\quad\textbf{\Large Theorem}}
\def\squarebox#1{\hbox to #1{\hfill\vbox to #1{\vfill}}}
\begin{document}
\title[Semi-finite forms of the quintuple product identity]
{A new semi-finite form of the quintuple product identity }
\author{Jun-Ming Zhu
}
\address{Department of Mathematics, Luoyang Normal University,
Luoyang City, Henan Province 471934, China}
\email{junming\_zhu@163.com}
%

\thanks{Keywords: quintuple product identity; semi-finite form; q-series.
\\\indent MSC (2010):   33D15, 11F27.
 \\\indent This research is supported by the
 Natural Science Foundation of China
 (Grant No. 11871258).
}

\begin{abstract}
The quintuple product identity are deduced from a new semi-finite form, which are obtained
from the very-well-poised $_6\phi_5$ series.
\end{abstract}
 \maketitle

\section{Introduction}

Throughout this paper the product of $q$-shifted factorials are defined by
\begin{equation*}\label{def}
(a;q)_\infty=\prod_{l=0}^{\infty}(1-aq^{l}) \qquad \mbox{and}\qquad
(a;q)_n=\frac{(a;q)_\infty}{(aq^n;q)_\infty},
\end{equation*}
for $n\in{\mathbb{Z}}$ and  $|q|<1$, with the following abbreviated
multiple parameter notation
$$
(a,b,\cdots,c; q)_k=(a;q)_k(b;q)_k\cdots(c; q)_k,\quad
k\in{\mathbb{Z\cup\{\infty\}}}.
$$

In a recent short paper \cite{zh1}, the author and Zhang give the theorem below.
\begin{thm}\label{1-qunn}
{( The first semi-finite form of the quintuple product identity )}
 There holds
\begin{equation}\label{s1}
\sum_{k=0}^\infty(1+zq^k)\frac{(z^2;q)_k q^{k^2}z^k}{(q;q)_k }
=(-z;q)_\infty(z^2q;q^2)_\infty,\quad z\in{\mathbb{C}}.
\end{equation}
\end{thm}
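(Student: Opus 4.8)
The plan is to obtain \eqref{s1} as a confluent limit of the very-well-poised ${}_6\phi_5$ summation formula
\begin{equation*}
\sum_{n=0}^\infty\frac{1-aq^{2n}}{1-a}\cdot\frac{(a,b,c,d;q)_n}{(q,aq/b,aq/c,aq/d;q)_n}\left(\frac{aq}{bcd}\right)^{n}=\frac{(aq,aq/bc,aq/bd,aq/cd;q)_\infty}{(aq/b,aq/c,aq/d,aq/bcd;q)_\infty},
\end{equation*}
valid when $|aq/bcd|<1$. First I would put $a=z^{2}$ and $b=z$. The decisive observation is that the very-well-poised factor then telescopes to a single binomial,
\begin{equation*}
\frac{1-z^{2}q^{2n}}{1-z^{2}}\cdot\frac{(z;q)_n}{(zq;q)_n}=\frac{(1-zq^{n})(1+zq^{n})}{(1-z)(1+z)}\cdot\frac{1-z}{1-zq^{n}}=\frac{1+zq^{n}}{1+z},
\end{equation*}
so that, after clearing the constant $1+z$, the identity becomes
\begin{equation*}
\frac{1}{1+z}\sum_{n=0}^\infty(1+zq^{n})\frac{(z^{2};q)_n}{(q;q)_n}\cdot\frac{(c,d;q)_n}{(z^{2}q/c,z^{2}q/d;q)_n}\left(\frac{zq}{cd}\right)^{n}=\frac{(z^{2}q,zq/c,zq/d,z^{2}q/cd;q)_\infty}{(zq,z^{2}q/c,z^{2}q/d,zq/cd;q)_\infty}.
\end{equation*}

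Next I would let $c\to\infty$ and then $d\to\infty$. Since $(c;q)_n c^{-n}=\prod_{j=0}^{n-1}(c^{-1}-q^{j})\to(-1)^{n}q^{n(n-1)/2}$ and $(z^{2}q/c;q)_n\to 1$, and likewise for $d$, the general term of the series tends to $\frac{1}{1+z}(1+zq^{n})\frac{(z^{2};q)_n}{(q;q)_n}q^{n(n-1)}(zq)^{n}=\frac{1}{1+z}(1+zq^{n})\frac{(z^{2};q)_n q^{n^{2}}z^{n}}{(q;q)_n}$, while on the right every factor except $(z^{2}q;q)_\infty$ and $(zq;q)_\infty$ tends to $1$; this yields
\begin{equation*}
\sum_{n=0}^\infty(1+zq^{n})\frac{(z^{2};q)_n q^{n^{2}}z^{n}}{(q;q)_n}=(1+z)\frac{(z^{2}q;q)_\infty}{(zq;q)_\infty}.
\end{equation*}
Finally I would recast the right-hand side as a product: from $(z^{2}q;q)_\infty=(z^{2}q;q^{2})_\infty(z^{2}q^{2};q^{2})_\infty$ and $(z^{2}q^{2};q^{2})_\infty=(zq;q)_\infty(-zq;q)_\infty$ one gets $\dfrac{(z^{2}q;q)_\infty}{(zq;q)_\infty}=(-zq;q)_\infty(z^{2}q;q^{2})_\infty$, and multiplying by $1+z$ and using $(1+z)(-zq;q)_\infty=(-z;q)_\infty$ produces exactly $(-z;q)_\infty(z^{2}q;q^{2})_\infty$, which is \eqref{s1}.

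The step I expect to be the real obstacle is justifying that the two confluences $c\to\infty$, $d\to\infty$ may be performed term by term inside the infinite sum. I plan to settle this by Tannery's theorem: on a disc such as $|z|<|q|^{-1}$ the $n$-th term is dominated, uniformly for $|c|,|d|$ large, by a summable geometric expression, using $|(c;q)_n c^{-n}|\le(1+|c|^{-1})^{n}$, the boundedness of $1/(z^{2}q/c;q)_n$, and the boundedness in $n$ of $(z^{2};q)_n$, $(z;q)_n$, $1/(q;q)_n$ and $1/(zq;q)_n$; dominated convergence then applies. Since both sides of the resulting identity are entire functions of $z$ (the series on the left converges for every $z\in\mathbb{C}$ because of the factor $q^{n^{2}}$), the identity established on that disc extends to all $z\in\mathbb{C}$. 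The remaining steps are routine $q$-shifted-factorial bookkeeping.
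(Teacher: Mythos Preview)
Your argument is correct and is precisely the derivation the paper has in mind: the paper only remarks that ``the $_6\phi_5$ formula \eqref{ds} also implies the identity in Theorem~\ref{1-qunn}'', and your specialization $a=z^{2}$, $b=z$, $c,d\to\infty$ is the exact parallel of the paper's proof of Theorem~\ref{2-qunn} (where $a=z^{2}q$, $b=zq$, $c=d\to\infty$). The telescoping of the very-well-poised factor to $(1+zq^{n})/(1+z)$, the confluence, and the product rearrangement are all carried out correctly; the Tannery/dominated-convergence justification and the analytic continuation in $z$ are standard and adequately sketched.
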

This is proved to be a semi-finite form of
the celebrated quintuple product identity, which can be stated as
\begin{equation}\label{ax}
\sum_{k=-\infty}^\infty(-1)^kq^{k(3k-1)\over2}z^{3k}(1+zq^k)
=\frac{(
 q,q/z^2,z^2;q)_\infty}{(z,q/z
;q)_\infty},\quad z\neq0.
\end{equation}
The  identity \eqref{ax}
 or its equivalent forms can be found in Fricke
\cite{fr}, Ramanujan \cite{ra} and Watson \cite{wa}.
This is an important  identity in combinatorics, number~theory
and special functions. For the historical note and various proofs, we
refer the reader to the paper \cite{co}.

 In this short note, we give a new semi-finite
form of the quintuple product identity
from the very-well-poised $_6\phi_5$ series, and then, the quintuple product identity  \eqref{ax} is deduced.
\begin{thm}\label{2-qunn}
{(The second semi-finite form of the quintuple product identity)}
 There holds
\begin{equation}\label{2qun}
\sum_{k=0}^\infty(1-z^2q^{2k+1})\frac{(z^2q;q)_k q^{k^2}
z^{k}}{(q;q)_k}\! =(-zq;q)_\infty(z^2q;q^2)_\infty,\quad
z\in{\mathbb{C}}.
\end{equation}
\end{thm}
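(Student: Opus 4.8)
The plan is to obtain \eqref{2qun} as a limiting, then specialized, case of the very-well-poised $_6\phi_5$ summation
\[
\sum_{k=0}^\infty\frac{1-aq^{2k}}{1-a}\,\frac{(a,b,c,d;q)_k}{(q,aq/b,aq/c,aq/d;q)_k}\Bigl(\frac{aq}{bcd}\Bigr)^k=\frac{(aq,aq/bc,aq/bd,aq/cd;q)_\infty}{(aq/b,aq/c,aq/d,aq/bcd;q)_\infty},
\]
which is valid for $|aq/bcd|<1$. First I would let $b\to\infty$ and $c\to\infty$. Using $(x;q)_k\sim(-x)^kq^{\binom{k}{2}}$ as $x\to\infty$, one finds $(b;q)_k(c;q)_k(aq/bcd)^k\to q^{k(k-1)}(aq/d)^k$ while $(aq/b;q)_k(aq/c;q)_k\to1$; on the product side every factor containing $b$ or $c$ tends to $1$, leaving only $(aq;q)_\infty/(aq/d;q)_\infty$. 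The term-by-term passage to the limit is legitimate because the emerging factor $q^{k(k-1)}$ dominates the series (dominated convergence). This gives the intermediate identity
\[
\sum_{k=0}^\infty\frac{1-aq^{2k}}{1-a}\,\frac{(a,d;q)_k}{(q,aq/d;q)_k}\,q^{k^2-k}\Bigl(\frac{aq}{d}\Bigr)^k=\frac{(aq;q)_\infty}{(aq/d;q)_\infty},
\]
now valid for all admissible $a,d$, since its left side converges for every $a,d$.

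Next I would make the \textbf{key specialization} $a=z^2q$ and $d=zq$. Then $1-aq^{2k}=1-z^2q^{2k+1}$; the ratio $(d;q)_k/(aq/d;q)_k=(zq;q)_k/(zq;q)_k=1$ cancels entirely; and $q^{k^2-k}(aq/d)^k=q^{k^2-k}(zq)^k=q^{k^2}z^k$. Hence the left side equals $(1-z^2q)^{-1}$ times the sum in \eqref{2qun}, while the right side is $(z^2q^2;q)_\infty/(zq;q)_\infty$ because $aq=z^2q^2$ and $aq/d=zq$. Multiplying through by $1-z^2q$ and using $(1-z^2q)(z^2q^2;q)_\infty=(z^2q;q)_\infty$ yields
\[
\sum_{k=0}^\infty(1-z^2q^{2k+1})\frac{(z^2q;q)_k\,q^{k^2}z^k}{(q;q)_k}=\frac{(z^2q;q)_\infty}{(zq;q)_\infty}.
\]
Finally, $(-zq;q)_\infty(zq;q)_\infty=(z^2q^2;q^2)_\infty$ and $(z^2q;q)_\infty=(z^2q;q^2)_\infty(z^2q^2;q^2)_\infty$, so the right side is exactly $(-zq;q)_\infty(z^2q;q^2)_\infty$. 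Both sides of the resulting identity are entire in $z$ (the series converges uniformly on compact sets thanks to $q^{k^2}$), so although the derivation formally excludes the points $z=q^{-m}$, $m\ge1$, where $(zq;q)_\infty$ vanishes, the identity persists for all $z\in\mathbb{C}$.

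The delicate points I anticipate are: correctly bookkeeping the powers of $q$ across the double limit $b,c\to\infty$ (it is easy to drop a factor $q^{\binom{k}{2}}$), justifying the interchange of limit and summation, and the routine but error-prone rearrangement of the infinite products on the right. The substitution $a=z^2q$, $d=zq$ is the crux that makes the $_6\phi_5$ collapse to \eqref{2qun}; once it is found the rest is bookkeeping.
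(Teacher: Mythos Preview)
Your argument is correct and is essentially the paper's proof: the paper also specializes the $_6\phi_5$ summation with $a=z^2q$, one numerator parameter equal to $zq$, and the remaining two sent to $\infty$, then multiplies by $1-z^2q$. Your write-up merely relabels which of $b,c,d$ is kept (you keep $d$, the paper keeps $b$) and supplies the product rearrangement and analytic-continuation details that the paper leaves implicit.
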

For more semi-finite forms of the  bilateral basic
hypergeometric series, the reader  can consult the papers
\cite{cf,jo,zh}.

\section{The proofs of Theorem \ref{2-qunn} and the quintuple product identity \eqref{ax} }
\begin{proof}[Proof of Theorem \ref{2-qunn}]We begin with the  non-terminating very-well poised $_6\phi_5$ series \cite[ p. 356, Eq. (II.
20)]{gr}:
\begin{equation}\label{ds}
\sum_{k=0}^\infty\frac{(a,qa^{1\over2},-qa^{1\over2},b,c,d;q)_k}{(q,a^{1\over2},-a^{1\over2},aq/b,aq/c,aq/d;q)_k}\left(\frac{qa}{bcd}\right)^k
=\frac{(aq,aq/bc,aq/bd,aq/cd;q)_\infty}
 {(aq/b,aq/c,aq/d,aq/bcd;q)_\infty},
\end{equation}
with $\left|\frac{qa}{bcd}\right|<1$. Replacing $(a^{1\over2},b)$ by
$(zq^{1\over2},zq)$ and letting $c=d\rightarrow\infty$, in the
resulting identity, we multiply both sides by $1-z^2q$, to give
\eqref{2qun}.
\end{proof}

Now we deduce the quintuple product identity \eqref{ax} from Theorem
\ref{2-qunn}. For a nonnegative integer $n$ and $z\neq0$, we replace
$z$ by $zq^{-n}$ in \eqref{2qun} to have
\begin{align}
&\sum_{k=0}^{\infty}\frac{(1-z^2q^{2k-2n+1})(z^2q^{1-2n};q)_k}{(q;q)_k}z^{k} q^{k^2-kn}\notag\\
&=\sum_{k=-n}^{\infty}\frac{(1-z^2q^{2k+1})(z^2q^{1-2n};q)_{k+n}}{(q;q)_{k+n}}z^{k+n} q^{k^2+kn}\notag\\
&=\frac{z^n(z^2q^{1-2n};q)_{n}}{(q;q)_{n}}
   \sum_{k=-n}^{\infty}\frac{(-1)^k(1-z^2q^{2k+1})(q^{n-k}/z^2;q)_{k}}{(q^{n+1};q)_k}z^{3k} q^{3k^2+k\over2}\label{mi1}\\
&=(-zq^{1-n};q)_\infty(z^2q^{1-2n};q^2)_\infty=\frac{(z^2q^{1-2n};q)_\infty}{(zq^{1-n};q)_\infty}\notag\\
&=\frac{z^n(z^2q^{1-2n};q)_{n}(1/z^2;q)_{n}(z^2q;q)_\infty}{(1/z;q)_{n}(zq;q)_\infty}.\label{mi2}
\end{align}
From \eqref{mi1} and \eqref{mi2}, we have
\begin{align}\label{qua}
\sum_{k=-n}^{\infty}\frac{(-1)^k(1-z^2q^{2k+1})(q^{n-k}/z^2;q)_{k}}{(q^{n+1};q)_k}z^{3k}
q^{3k^2+k\over2}=\frac{(q,1/z^2;q)_{n}(z^2q;q)_\infty}{(1/z;q)_{n}(zq;q)_\infty}.
\end{align}
Then, restricting $z$ in any compact subset of $0<|z|<\infty$ and
letting $n\rightarrow\infty$ gives
\begin{align}\label{qua1}
\sum_{k=-\infty}^{\infty}(-1)^k(1-z^2q^{2k+1})z^{3k}
q^{3k^2+k\over2}=\frac{(q,1/z^2,z^2q;q)_\infty}{(1/z,zq;q)_\infty}.
\end{align}
By analytic
continuation, the restriction on $z$ may be relaxed. This is an equivalent form of the identity \eqref{ax}.

It is easy to verify that the identity \eqref{qua} is   equivalent
to \eqref{2qun}.

Note that the $_6\phi_5$ formula \eqref{ds} also implies the
identity in Theorem \ref{1-qunn}.



\begin {thebibliography}{99}

\bibitem{cf}
 W. Y. C. Chen, A. M. Fu, Semi-finite forms of bilateral basic hypergeometric series, Proc. Amer. Math. Soc. 134 (2006) 1719--1725.

\bibitem{co}
S. Cooper, The quintuple product identity, Int. J. Number Theory
 2 (1) (2006) 115--161.

\bibitem{fr}
R. Fricke, Die Elliptischen Funktionen und ihre Anwendungen, Erste
Teil, Teubner, Leipzig, 1916.

\bibitem{gr}
 G. Gasper,   M. Rahman, Basic Hypergeometric Series, 2nd ed., Cambridge
Univ. Press, Cambridge, 2004.

\bibitem{jo}
F. Jouhet, More semi-finite forms of bilateral basic hypergeometric
series, Annals of Combinatorics 11 (2007) 47--57.

\bibitem{ra}
S. Ramanujan, The Lost Notebook and Other Unpublished Papers,
Narosa, New Delhi, 1988.

\bibitem{wa}
G. N. Watson, Theorems stated by Ramanujan (VII): Theorems on
continued fractions, J. London Math. Soc. 4 (1929) 39--48.


\bibitem{zh}
J.-M. Zhu, A semi-finite proof of Jacobi's triple product identity,
Amer. Math. Monthly 122 (10) (2015), 1008--1009.

\bibitem{zh1}
J.-M. Zhu, Z.-Z. Zhang, A semi-finite form of the quintuple product
identity, J. Comb. Theory, Series A (2021), https://doi.org/10.1016/j.jcta.2021.105509
\end{thebibliography}

\end{document}